\newtheorem{thm}{Theorem}
\newtheorem{lem}[thm]{Lemma}
\newtheorem{rmk}{Remark}
\title{On fractional Bessel equation and the description of corneal topography}
\author{Wojciech Okrasi\'nski, \L ukasz P\l ociniczak}
\begin{document}
\maketitle

{\bf Keywords:} fractional calculus, corneal topography, Bessel function, boundary value problem

\begin{abstract}
In this note we apply a modified fractional Bessel differential equation to the problem of describing corneal topography. We find the solution in terms of the power series. This solution has an interesting behavior at infinity which is a generalization of the classical results for modified Bessel function of order 0. Our model fits the real corneal geometry data with an error of order of a few per cent. 
\end{abstract}

\section{Introduction}
Sight is the most crucial sense that we posses since it enables us to perceive the world very accurately. With the advance of medical technology treating various eye diseases becomes more adequate and successful. This would not be possible without proper mathematical models of biomechanics of eye and its constituents. One of the most important parts of the human eye is the cornea because it is responsible for about two-thirds of refractive power (for biological treatment see for ex. \cite{Cornea}). Mathematical description of corneal topography is very important from the point of view of ophthalmologists because many seeing disorders originate in some distortions in corneal geometry.

There are many types of corneal topography mathematical models. The most common and simple are based on conic sections (see for ex. \cite{Conic, Eccen}). Unfortunately, they are taken without much physical motivation. Some, more complex, models are based on finite element methods or shell theory \cite{Shell, FEM}. Models that are widely used to describe abberations in cornea or lens often use Zernike orthogonal polynomials \cite{Zernike,ZernikeRat}.

In \cite{Bessel} we have proposed yet another model of corneal topography. It was based on physical derivation and membrane equation. The first approximation to that model is
\begin{equation}
	-\Delta h + a h =b, \quad h|_{\partial \Omega}=0,
\label{MainEq}
\end{equation}
where $a$ and $b$ are dimensionless, positive constants and $\Omega$ is the domain on which cornea is situated. In this letter we generalize this model and use fractional derivatives instead of classical ones. As a result we obtain a modified fractional Bessel differential equation which we solve and find interesting asymptotic behavior. The fractional calculus methods are increasingly more popular and very successful in a large number of physical application (see for ex. \cite{Heymans, Ervin, Turner}). A comprehensive introductions to fractional calculus and its applications can be found for expample in \cite{Kilbas, Podlubny}. Up to authors' knowledge the fractional Bessel equation was investigated only in \cite{Rodrigues}. However, the form of that equation was different from the one analyzed by us.

\section{Model and its analysis}
Assume that the corneal surface is axisymmetric, that is $h=h(r)$ where $r\in[0,1]$ and $\Omega$ is a unit circle. We also assume that $h$ can be represented by convergent power series. Rewriting (\ref{MainEq}) in polar coordinates we obtain
\begin{equation}
	-\frac{1}{r}\frac{d}{dr}\left(r h'\right)+a h =b, \quad h'(0)=0, \quad h(1)=0.
\label{polar}
\end{equation}
The condition at $r=0$ grants us a smooth solution at the origin while $h(1)=0$ determines the rim of cornea.
Now, multiply (\ref{polar}) by $r^2$, let $y=-h$ and $x=\sqrt{a}r$ to get
\begin{equation}
	x\frac{d}{dx}\left(x y'\right)-x^2 y=\frac{b}{a}x^2,
\label{polar2}
\end{equation}
which is a nonhomogeneous modified Bessel equation of order $0$. Its solution and application to corneal topography was presented in \cite{Bessel}. Now, we want to generalize this model and use fractional derivative instead of the usual one. We propose
\begin{equation}
	x^{\alpha} D_0^\alpha ( x y')-x^2 y=\frac{b}{a}x^2, \quad y'(0)=0, \quad y(\sqrt{a})=0, \quad 0<\alpha\leq1,
\label{FracBessel}
\end{equation}
as the model of corneal geometry. Operator $D_0^{\alpha}$ is the Riemann-Liouville fractional derivative defined by $D_0^{\alpha}y := \frac{d}{dx} D_0^{-(1-\alpha)}y$, where 
\begin{equation}
	(D_0^{-\mu}y)(x):=\frac{1}{\Gamma(\mu)}\int_0^x {(x-t)^{\mu-1}y(t)dt}
\end{equation}
is the fractional integral operator (see \cite{Kilbas}). It can be shown (see \cite{Podlubny,Li}) that in our setting since $xy'(x)|_{x=0}=0$ and $y$ is analytic the Riemann-Lioville and Caputo fractional derivatives coincide. The choice of appropriate power of $x$ in (\ref{FracBessel}) is neccesary to maintain dimensional consistency and physical meaning. We will call (\ref{FracBessel}) the modified fractional Bessel equation of order $0$ in analogy with the classical case.

Immediately we see that $y_p(x)=-b/a$ is the particular solution of (\ref{FracBessel}). Because the general solution to (\ref{FracBessel}) has the form $y=y_H+y_p$, we only have to find the solution $y_H$ to the homogeneous equation
\begin{equation}
	x^{\alpha} D_0^\alpha ( x y')=x^2 y, \quad y'(0)=0, \quad 0<\alpha\leq1,
\label{FracBesselHom}
\end{equation}
where the condition $y'(0)=0$ comes from the fact that $y_p$ is constant. We seek for a solution to (\ref{FracBesselHom}) in the form of power series, that is
\begin{equation}
	y(x)=\sum_{n=0}^{\infty} {a_n x^n}. 
\end{equation}
Substituting it into (\ref{FracBesselHom}) and noting that $(D_0^\alpha t^n)(x)=\Gamma(n+1)/\Gamma(n-\alpha+1)x^{n-\alpha}$ we obtain recurrence formulas for coefficients $a_n$
\begin{equation}
	a_n=\frac{\Gamma(n-\alpha+1)}{n^2 \Gamma(n)}a_{n-2}, \quad n\geq2.
\label{coeff}
\end{equation}
The equations for $a_0$ and $a_1$ are automatically fulfilled since $0=y'(0)=a_1$ and we can choose $a_0=1$. By the ratio test we see that the series is absolutely convergent. Thus, taking into account (\ref{coeff}) we obtain the solution to homogeneous fractional Bessel equation (\ref{FracBesselHom}) which in accordance with classical theory we will denote by $I_0^\alpha$
\begin{equation}
	y_H(x)=C I_0^\alpha (x)=C\sum_{n=0}^\infty{\left(\prod_{i=1}^n \frac{\Gamma(2i-\alpha+1)}{\Gamma(2i)} \right) \frac{1}{n!^2}\left(\frac{x}{2}\right)^{2n}}, \quad 0<\alpha\leq1,
\label{I0}
\end{equation}
for some constant $C$ and the convention that $\prod_{i=1}^0 =1$. This modified fractional Bessel function is a generalization of the classical Bessel function since $I_0^1=I_0$. Additionally, we can see that $I_0^0=\exp(x^2/2)$. Noting the other boundary condition $y(\sqrt{a})=0$ and returning to the original variables $h$ and $r$ we can write the solution to the boundary value problem (\ref{FracBessel})
\begin{equation}
	h(r)=\frac{b}{a}\left(1-\frac{I_0^\alpha(\sqrt{a}r)}{I_0^\alpha(\sqrt{a})} \right).
\label{Sol}
\end{equation}
This equation describes the shape of human cornea and we will see later that it gives very accurate fit with the real data.

The modified fractional Bessel function $I_0^\alpha$ defined in (\ref{I0}) has very interesting behavior as $x\rightarrow\infty$. As we will see it is a generalization of the classical results from asymptotic theory. First, we prove a technical lemma.

\begin{lem}
Let $F_{\mu}(x):=\int_0^1 {t^{\mu}e^{-x t}dt}$, then $F_{\mu}$ has the following leading order behavior as $x\rightarrow\infty$
\begin{equation}
	F_{\mu}(x) \sim x^{-(\mu+1)}\Gamma(\mu+1)
\end{equation}
\label{Lem}
\end{lem}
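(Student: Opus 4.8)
The plan is to reduce $F_\mu$ to an incomplete Gamma function by rescaling the integration variable and then to note that the incomplete Gamma function converges to the complete one. Concretely, I would substitute $s=xt$ (so $t=s/x$, $dt=ds/x$), which for $x>0$ turns the integral into
\begin{equation}
	F_\mu(x)=\int_0^1 t^\mu e^{-xt}\,dt = x^{-(\mu+1)}\int_0^x s^\mu e^{-s}\,ds .
\end{equation}
Here I tacitly assume $\mu>-1$, which is exactly the condition making all integrals involved finite; in the intended application to $I_0^\alpha$ the relevant $\mu$ will in fact be positive.

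Next I would argue that the remaining integral tends to $\Gamma(\mu+1)$ as $x\to\infty$. Since $\Gamma(\mu+1)=\int_0^\infty s^\mu e^{-s}\,ds$ converges for $\mu>-1$, we have
\begin{equation}
	\Gamma(\mu+1)-\int_0^x s^\mu e^{-s}\,ds = \int_x^\infty s^\mu e^{-s}\,ds \longrightarrow 0
\end{equation}
as $x\to\infty$; one can make this quantitative — the tail is $O\!\left(x^{\mu}e^{-x}\right)$, hence exponentially small — but for the leading-order statement mere convergence is enough. Combining the two displays gives $F_\mu(x)=x^{-(\mu+1)}\bigl(\Gamma(\mu+1)+o(1)\bigr)$, which is precisely $F_\mu(x)\sim x^{-(\mu+1)}\Gamma(\mu+1)$.

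I do not anticipate a genuine obstacle here: the statement is essentially the first term of Watson's lemma, and the rescaling makes it elementary. The only point requiring a little care is bookkeeping the hypothesis $\mu>-1$ so that $\Gamma(\mu+1)$ and the rescaled integral are well defined. An alternative route, avoiding any appeal to Watson's lemma, is a single integration by parts in the original integral, $F_\mu(x)=-\frac{e^{-x}}{x}+\frac{\mu}{x}F_{\mu-1}(x)$, after which the claimed asymptotics follow inductively once the base case is in place; however, the substitution argument above is shorter and cleaner, so that is the one I would present.
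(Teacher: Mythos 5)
Your proposal is correct and follows essentially the same route as the paper: the substitution $s=xt$ reduces $F_\mu$ to $x^{-(\mu+1)}\int_0^x s^\mu e^{-s}\,ds$, and the tail $\int_x^\infty s^\mu e^{-s}\,ds$ is shown to be negligible. The only difference is cosmetic --- the paper expands that tail into a full exponentially small asymptotic series via repeated integration by parts, whereas you correctly observe that mere convergence of the tail to zero already suffices for the leading-order claim, and you make the hypothesis $\mu>-1$ explicit, which the paper leaves tacit.
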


\begin{proof}
With a change of variable $\left( s= x t \right)$ we can write the integral defining $F_{\mu}$ as
\begin{equation}
	F_{\mu}(x)=x^{-(\mu+1)} \int_0^x {s^{\mu} e^{-s} ds}=x^{-(\mu+1)} \left(\Gamma(\mu+1)-\int_x^\infty {s^{\mu} e^{-s} ds}\right).
\end{equation}
Now, integrating by parts we write the asymptotic expansion of $F_{\mu}$
\begin{equation}
	F_{\mu}(x) \sim x^{-(\mu+1)}\Gamma(\mu+1) -e^{-x} \sum_{k=0}^\infty {\frac{\Gamma(\mu+1)}{\Gamma(\mu-k+1)} \frac{1}{x^{k+1}}},
\label{FmuAsym}
\end{equation}
as $x\rightarrow\infty$. Since the second term in (\ref{FmuAsym}) is exponentially small the leading order behavior of $F_{\mu}$ is defined by te first term and that concludes the proof. 
\end{proof}

The proof of following theorem is based on the Laplace method for asymptotic integrals. The main point is that we are dealing with integro-differential equation which introduces some complications. 

\begin{thm}
The solution to modified Bessel fractional differential equation (\ref{FracBessel}) has the following asymptotic leading order behavior as $x\rightarrow\infty$
\begin{equation}
	I_0^\alpha (x) \sim x^{-\frac{\alpha\left(2-\alpha\right)}{1+\alpha}} \exp\left(\frac{1+\alpha}{2} x^{\frac{2}{1+\alpha}}\right).
\label{Asym}
\end{equation}
\end{thm}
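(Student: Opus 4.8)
The plan is to convert the homogeneous equation (\ref{FracBesselHom}) into an integral equation and then run a Laplace/WKB argument whose non-local step is exactly Lemma~\ref{Lem}. Writing $y=I_0^\alpha$, the function $g:=xy'$ satisfies $g(0)=0$ and is analytic, so $D_0^{-\alpha}$ inverts $D_0^\alpha$ on $g$ (equivalently, using the paper's remark that here Riemann--Liouville and Caputo coincide). Dividing (\ref{FracBesselHom}) by $x^\alpha$ and applying $D_0^{-\alpha}$ therefore yields the equivalent equation
\[
 x\,y'(x)=\frac{1}{\Gamma(\alpha)}\int_0^x (x-t)^{\alpha-1}t^{2-\alpha}y(t)\,dt .
\]
I would look for the dominant balance in the exponential form $y(x)=I_0^\alpha(x)\sim A\,x^{\sigma}\exp(\lambda x^{\rho})$ as $x\to\infty$, with $A,\lambda,\rho>0$ and $\sigma$ real to be determined; that this is the relevant branch (not a decaying one) follows from the positivity of all coefficients in (\ref{I0}).

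In the integral substitute $t=xu$, giving $x^{2}\int_0^1(1-u)^{\alpha-1}u^{2-\alpha}y(xu)\,du$. Since $y(xu)\sim A x^{\sigma}u^{\sigma}e^{\lambda x^{\rho}u^{\rho}}$ and $u^{\rho}=1-\rho(1-u)+O((1-u)^2)$, the integrand concentrates at the endpoint $u=1$; freezing the slowly varying factors there and setting $v=1-u$ turns the integral into $F_{\alpha-1}(\lambda\rho x^{\rho})$ plus higher moments $F_{\alpha-1+k}(\lambda\rho x^{\rho})$. Lemma~\ref{Lem} evaluates each of these, the $k=0$ term giving $\Gamma(\alpha)(\lambda\rho x^{\rho})^{-\alpha}$ to leading order with exponentially small error, so the right-hand side is $\sim A(\lambda\rho)^{-\alpha}x^{\sigma+2-\alpha\rho}e^{\lambda x^{\rho}}$. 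Comparing this with the left-hand side $xy'\sim A\lambda\rho\,x^{\sigma+\rho}e^{\lambda x^{\rho}}$ forces $\rho(1+\alpha)=2$ and $(\lambda\rho)^{1+\alpha}=1$, i.e.\ $\rho=\tfrac{2}{1+\alpha}$ and $\lambda=\tfrac{1+\alpha}{2}$; this already produces the exponential factor in (\ref{Asym}).

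The main obstacle is that the algebraic exponent $\sigma$ cancels identically at leading order (in contrast with the classical Bessel ODE), so it must be recovered from the first correction, which is of relative size $x^{-\rho}$. One then has to collect, consistently, every $O(x^{-\rho})$ contribution: the terms $-(2-\alpha)(1-u)$ and $-\sigma(1-u)$ from expanding $u^{2-\alpha}$ and $u^{\sigma}$ near $u=1$, the quadratic term $\tfrac{\rho(\rho-1)}{2}(1-u)^2$ in the phase, the subleading piece $\sigma x^{\sigma-1}$ of $y'$ on the left, and the $k=1,2$ moments furnished by Lemma~\ref{Lem}. Matching the $x^{-\rho}$ coefficients produces a single linear equation for $\sigma$ whose solution is the exponent displayed in (\ref{Asym}); the result is consistent with the classical case $\alpha=1$ ($\rho=1$, $\lambda=1$, $\sigma=-\tfrac12$, i.e.\ $I_0(x)\sim e^{x}/\sqrt{2\pi x}$) and with $\alpha\to 0$ ($\rho=2$, $\lambda=\tfrac12$, $\sigma=0$, i.e.\ $I_0^0=e^{x^2/2}$). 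A completely rigorous version requires a uniform remainder bound in the Laplace expansion above; as an independent check, the same leading order can be obtained directly from the series (\ref{I0}) by inserting Stirling's formula for $\Gamma(2i-\alpha+1)/\Gamma(2i)$ and applying a saddle-point estimate to $\sum_{n}a_{2n}x^{2n}$.
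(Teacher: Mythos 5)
Your strategy is essentially the paper's own: invert $D_0^\alpha$ to obtain the Volterra equation $xy'(x)=\frac{1}{\Gamma(\alpha)}\int_0^x(x-t)^{\alpha-1}t^{2-\alpha}y(t)\,dt$, rescale to $[0,1]$, insert an exponential ansatz, reduce the endpoint contribution to the moments $F_\mu$ of Lemma~\ref{Lem}, read off $\rho=\tfrac{2}{1+\alpha}$ and $\lambda=\tfrac{1+\alpha}{2}$ from the leading balance, and extract the algebraic exponent from the $O(x^{-\rho})$ correction. The only structural difference is cosmetic: you posit a fixed power $x^{\sigma}$ where the paper carries an undetermined slowly varying factor $f(x)$ and derives an ODE for it. Up to and including the exponential factor the argument is sound.

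The gap is in the final step, which you assert rather than perform, and which is exactly where the theorem is decided. You correctly list the quadratic phase term $\tfrac{\rho(\rho-1)}{2}(1-u)^2$ among the $O(x^{-\rho})$ contributions: after $v=1-u$ it produces $\tfrac{\rho-1}{2}x^{\rho}F_{\alpha+1}(x^{\rho})\sim\tfrac{\rho-1}{2}\Gamma(\alpha+2)\,x^{-\rho(\alpha+1)}$, the same relative order $x^{-\rho}$ as the retained $-(2-\alpha)F_\alpha$ and $-\sigma F_\alpha$ terms. (The paper's proof keeps only the linear term of $(1-t)^{\lambda}$ and silently drops this contribution; that is how it arrives at the stated exponent.) Carrying out the matching with the quadratic term included gives $(1+\alpha)\sigma=-\alpha(2-\alpha)+\tfrac{\alpha(1-\alpha)}{2}$, i.e.
\begin{equation*}
\sigma=-\frac{\alpha(3-\alpha)}{2(1+\alpha)},
\end{equation*}
which coincides with the exponent $-\tfrac{\alpha(2-\alpha)}{1+\alpha}$ of (\ref{Asym}) only at $\alpha=0$ and $\alpha=1$. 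Your own proposed cross-check points the same way: Stirling gives $\prod_{i=1}^{n}\Gamma(2i-\alpha+1)/\Gamma(2i)=C\,2^{n(1-\alpha)}(n!)^{1-\alpha}n^{-\alpha(1-\alpha)/4}(1+o(1))$, and a saddle-point evaluation of the resulting series returns $x^{-\alpha(3-\alpha)/(2(1+\alpha))}$ times the same exponential. So the computation you set up does not reproduce (\ref{Asym}) for $0<\alpha<1$: either the quadratic phase term must be (unjustifiably) discarded to match the paper, or the algebraic prefactor in the statement needs revisiting. In either case, ``matching produces the exponent displayed in (\ref{Asym})'' cannot be left as an assertion --- it is the entire content of the theorem beyond the exponential, and as written your setup yields a different answer.
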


\begin{rmk}
We see that for $\alpha=0,1$ asymptotic form (\ref{Asym}) reduces to well known formulas for $I_0^0(x)=\exp(x^2/2)$ and $I_0^1(x)=I_0(x)\sim x^{-1/2}\exp\left(x\right)$ (see for ex. \cite{AS}).
\label{Rem}
\end{rmk}

\begin{proof}
By using the identity for composition of fractional integral $D_0^{-\alpha}$ and Riemann-Lioville fractional derivative (\cite{Podlubny}, formulas 2.113 and 2.135-136) and by the fact that $xy'(x)|_{x=0}=0$ we have
\begin{equation}
	(D_0^{-\alpha} D_0^\alpha (xy'(x)))(x)=x y'(x) \quad 0<\alpha<1.
\end{equation}
We transform (\ref{FracBessel}) into an Volterra integro-differential equation
\begin{equation}
	y'(x)=\frac{1}{\Gamma(\alpha)}\frac{1}{x} \int_0^x {(x-t)^{\alpha-1}t^{2-\alpha} y(t)dt}.
\label{IntEq}
\end{equation}
Since we are looking for the behavior of (\ref{IntEq}) for large $x$ we change the variable $s=t/x$ to obtain constant limits of integration
\begin{equation}
	y'(x)=\frac{x}{\Gamma(\alpha)} \int_0^1 {(1-s)^{\alpha-1}s^{2-\alpha} y(s x)ds }.
\end{equation}
Now, guided by Remark \ref{Rem} we seek the approximate solution of (\ref{IntEq}) in the form $y(x)=f(x)\exp\left( x^\lambda/\lambda \right)$, where $f$ and $\lambda$ are to be determined and $f$ has algebraic growth. We have
\begin{equation}
	y'(x)=\frac{x}{\Gamma(\alpha)} \int_0^1 {(1-s)^{\alpha-1}s^{2-\alpha} f(s x)e^{\frac{x^\lambda}{\lambda} s^\lambda} ds }.
\label{IntEq2}
\end{equation}
Notice that for large $x$ the integrand in (\ref{IntEq2}) is dominated by exponential term which has its maximum at $s=1$. To move this maximum to the lower limit we substitute again $t=1-s$ and obtain
\begin{equation}
	y'(x)=\frac{x}{\Gamma(\alpha)} \int_0^1 {(1-t)^{2-\alpha} t^{\alpha-1} f(x(1-t))e^{ \frac{x^\lambda}{\lambda} (1-t)^\lambda} dt }.
\label{IntEq3}
\end{equation} 
As in the Laplace method for asymptotic integrals (see for ex. \cite{Bender}) for $x\rightarrow\infty$ the greatest contribution to the integral (\ref{IntEq}) comes from the neighborhood of the maximum of exponential term, that is when $t$ is close to $0$. Writing $(1-t)^\lambda=1-\lambda t+...$, $f(x(1-t))=f(x)-x f'(x) t+...$ and retrieving only first terms we can make an approximation valid for large x 
\begin{equation}
\begin{split}
	y'(x) & \approx \frac{x}{\Gamma(\alpha)} e^{ \frac{x^\lambda}{\lambda} } \left( f(x) \int_0^1 {(1-t)^{2-\alpha} t^{\alpha-1} e^{-x^\lambda t}dt} \right. \\
			  & \left. + x f'(x) \int_0^1 {(1-t)^{2-\alpha} t^{\alpha} e^{-x^\lambda t}dt} \right).
\end{split}
\label{IntEq4}
\end{equation}
In the first integral in (\ref{IntEq4}) we further approximate $(1-t)^{2-\alpha}\approx 1-(2-\alpha)t$ and in the second $(1-t)^{2-\alpha}\approx1$. This is justified since the whole mass of the integral is focused near $t=0$. Using Lemma \ref{Lem} we can write
\begin{equation}
\begin{split}
	y'(x) & \approx \frac{x}{\Gamma(\alpha)} e^{ \frac{x^\lambda}{\lambda} } \left[ f(x) \left(F_{\alpha-1}(x^\lambda)-(2-\alpha)F_\alpha(x^\lambda)\right) + xf'(x)F_\alpha(x^\lambda) \right] \\
	& \sim e^{ \frac{x^\lambda}{\lambda} } \left[ \left(x^{-\lambda\alpha+1}-\alpha(2-\alpha)x^{-\lambda(\alpha+1)+1}\right)f(x) -\alpha x^{2-\lambda(\alpha+1)}f'(x) \right],
\end{split}
\end{equation}
where we used the formula $\Gamma(\alpha+1)=\alpha\Gamma(\alpha)$. Noticing that $y'(x)=f'(x)\exp(x^\lambda/\lambda)+x^{\lambda-1}\exp(x^\lambda/\lambda)$ we finally obtain a differential equation for $f$
\begin{equation}
	\left(1+\alpha x^{2-\lambda(\alpha+1)}\right)f' = \left(x^{-\lambda\alpha+1}-x^{\lambda-1}-\alpha(2-\alpha)x^{-\lambda(\alpha+1)+1}\right)f.
\end{equation}
We see that the choice of $\lambda=2/(1+\alpha)$ is not only necessary for $f$ to have algebraic growth but also greatly simplifies this equation into 
\begin{equation}
	\frac{f'}{f}=-\frac{\alpha(2-\alpha)}{1+\alpha}\frac{1}{x} ,
\end{equation}
which has the solution
\begin{equation}
	f(x)=C x^{-\frac{\alpha(2-\alpha)}{1+\alpha}}.
\end{equation}
This finishes the proof.
\end{proof}

\section{Application to corneal topography}
In this section we apply previously analyzed mathematical model (\ref{Sol}) to a dataset consisting of $123\times123$ measurement points of real cornea. We fit the solution $h$ finding the least squares unknown parameters $a$, $b$ and $\alpha$. Cornea has its thickness thus we make two fits: one for exterior and one for interior surface. For exterior surface we find $a=0.580404$, $b=1.19734$ and $\alpha=0.421345$ with a mean absolute fitting error $0.014mm$. Similarly, for interior surface we have $a=0.818763$, $b=1.66664$, $\alpha=0.503431$ and the mean absolute fitting error $0.03mm$. We see that both parameters $a$ and $b$ are of order of unity what was expected (see \cite{Bessel}) and the order of derivative in (\ref{FracBessel}) lies between $1$ and $2$. Contour plots of absolute fitting errors are depicted on Figure \ref{Err}.

\begin{figure}[htb]
	\centering
	\includegraphics[scale=0.65]{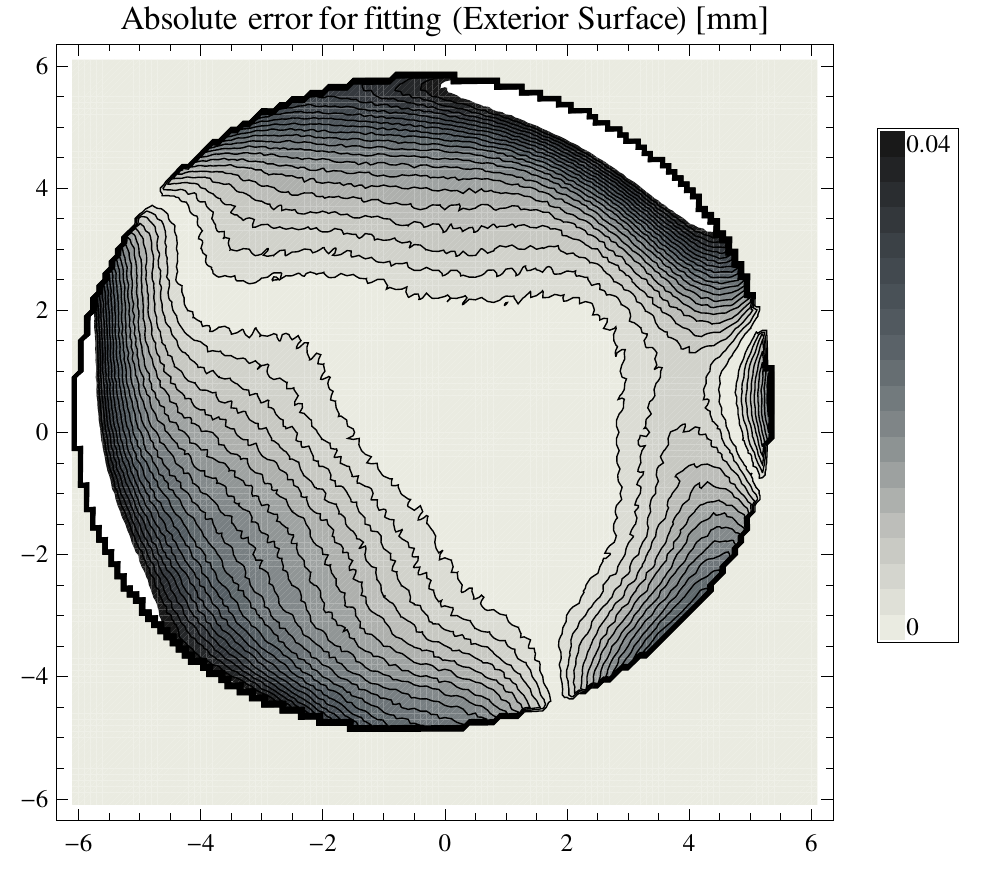}
	\includegraphics[scale=0.65]{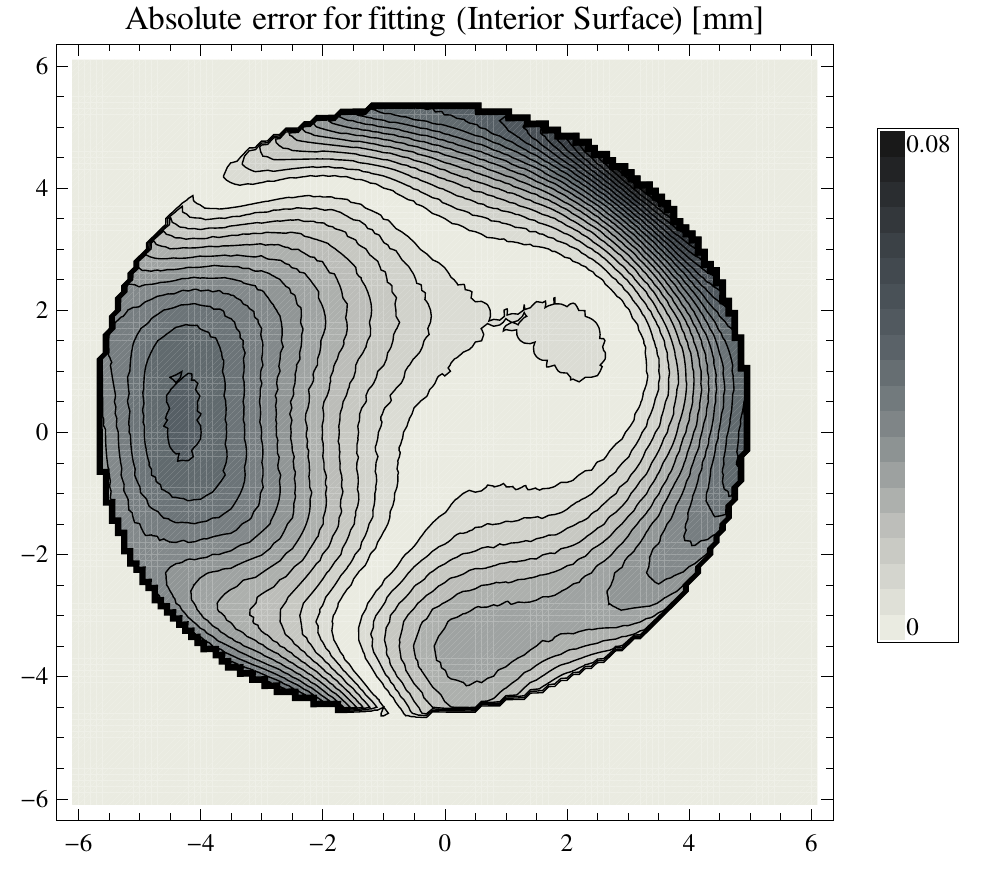}
	\caption{Absolute fitting errors for exterior (left) and interior (right) surfaces.}
	\label{Err}
\end{figure}

\section{Conclusion}
In this letter we have proposed a new mathematical model of corneal topography based on the modified fractional Bessel differential equation. We found its solution in a form of absolutely convergent power series. This solution is a generalization of the classical modified Bessel function of order 0. Also, we have found the behavior of $I_0^\alpha$ as $x\rightarrow\infty$, which reduces to the known cases as $\alpha=0,1$. When applying our model to the real corneal data we found accurate fit and parameter orders predicted by theory.

\section*{Acknowledgment}
The authors would like to thank Dr. Robert Iskander from Institute of Biomedical Engineering
and Instrumentation, Wroclaw University of Technology, Poland and School of Optometry, Queensland University of Technology, Australia for access to the data.

\end{document}